\DeclareMathOperator*{\argmax}{arg\,max}
\DeclareMathOperator*{\argmin}{arg\,min}
\definecolor{mygreen}{RGB}{153,255,153}
\definecolor{myorange}{RGB}{255,178,102}
\definecolor{myred}{RGB}{255,153,153}
\definecolor{myblue}{RGB}{153,204,255}
\newcommand{\ts}{\textsuperscript}
\title{Partial Observability Approach for the Optimal Transparency Problem in Multi-agent Systems}
\author{
    Sadegh~Arefizadeh\\
    Dept. of Electrical and Computer Engineering\\
    Tarbiat Modares University\\
    Tehran, Iran\\
    \texttt{sadegh.arefizadeh@modares.ac.ir}
\And
    Sadjaad~Ozgoli\\
    Dept. of Electrical and Computer Engineering\\
    Tarbiat Modares University\\
    Tehran, Iran\\
    \texttt{ozgoli@modares.ac.ir}
\And
    Sadegh~Bolouki\\
    Dept. of Electrical and Computer Engineering\\
    Tarbiat Modares University\\
    Tehran, Iran\\
    \texttt{bolouki@modares.ac.ir}
\And
    Tamer~Ba\c{s}ar\\
    Dept. of Electrical and Computer Engineering\\
    University of Illinois at Urbana-Champaign\\
    Urbana, IL, USA\\
    \texttt{basar1@illinois.edu}
}
\theoremstyle{definition}
\newtheorem{theorem}{Theorem}
\newtheorem{assumption}{Assumption}
\begin{document}
\maketitle




\begin{abstract}                          
    This paper considers a network of agents, where each agent is assumed to take actions optimally with respect to a predefined payoff function involving the latest actions of the agent's neighbors. Neighborhood relationships stem from payoff functions rather than actual communication channels between the agents. A principal is tasked to optimize the network's performance by controlling the information available to each agent with regard to other agents' latest actions. The information control by the principal is done via a partial observability approach, which comprises a static partitioning of agents into blocks and making the mean of agents' latest actions within each block publicly available. While the problem setup is general in terms of the payoff functions and the network's performance metric, this paper has a narrower focus to illuminate the problem and how it can be addressed in practice. In particular, the performance metric is assumed to be a function of the steady-state behavior of the agents. After conducting a comprehensive steady-state analysis of the network, an efficient algorithm finding optimal partitions with respect to various performance metrics is presented and validated via numerical examples.
\end{abstract}

\section{Introduction}

    Decision-making of self-interested interconnected agents is an interesting topic that has come up in various fields of research such as game theory \cite{27}, network science \cite{28}, and economics \cite{29}. In decision-making processes, information available to agents plays a crucial role as it influences the outcome of the process. From a game-theoretical perspective, there are two types of information, the availability of which, or lack thereof, have been discussed explicitly. The first type of information involves the game's defining parameters such as the players' action/strategy sets, their payoff functions, and rules of the game. When the entire information of this type is available to all players, the game is said to be one of {\it complete information}. Otherwise, a game of {\it incomplete information}, also known as a {\it Bayesian game}, ensues \cite{31}. The second type of information, which is of on-line type, and is exclusively defined for sequential games, involves the players' latest taken actions. The global availability of such information results in a game with {\it perfect information}, as opposed to one with {\it imperfect information} \cite{24}.

    In many cases, such as \cite{19,20}, incomplete and/or imperfect information arise from the nature of the problem, and therefore can not be regulated. However, there also exist situations where a principal has some control over the availability of information to the players. The extent to which information of aforementioned types is available to each player, herein broadly referred to as the level of {\it transparency}, influences the outcome of the game. Therefore, assuming that there exists a truthful principal with control over transparency, an optimal control problem emerges, with the objective of achieving an optimal network performance. Notable examples aiming to address this problem include \cite{26}, where the principal determines the set of players receiving the information signal, and 
    \cite{1}, which explores what portion of the population should receive the information signal to achieve optimality.


    In this work, inspired by network aggregative game models, we introduce and investigate a partial observability framework for the optimal transparency problem. Aggregative games are those where each player's payoff function depends on his own strategy and the aggregate of all other players' strategies. A network aggregative game (NAG) is a generalization of aggregative games in which each player's payoff is a function of his own strategy and the aggregation of his neighbors' strategies \cite{32}. Network aggregative games have been been utilized to model networks in different applications, including economics networks \cite{4}, opinion dynamics \cite{5}, and traffic networks \cite{6}. 
    The partial observability approach in this work involves a setting where (i) a principal is assumed to have control over the transparency of players' latest actions in the sense that she partitions the set of players into blocks and announces the mean action of each block publicly, and (ii) the players behave according to the best response strategy given the publicly available information. We address the optimal partition problem for two different network performance metrics, namely social welfare and free riding. We further take the players' privacy into consideration as a constraint in these optimization problems.


\subsection{Related Work}

    A large body of the game theory literature have been focused on the role of information in networks of interactive agents, with a few of remarkable ones described in the following. In \cite{19}, an algorithm for computing the Nash equilibrium is discussed for the case where players have asymmetric information about each other. The authors in \cite{20} investigate how information restriction associated with the players' degree of locality affects the achievable performance guarantees. In \cite{21}, a differential game is considered in which the players have incomplete information about their payoff functions and continually update their beliefs about the environment in order to improve their estimation of the real setting of the environment and optimize their payoffs. A special class of submodular optimization problems is addressed in \cite{22}, where each player has limited information about other players' actions and the effects of these limitations on the outcome are investigated. In \cite{23}, a mechanism is designed to maximize the welfare while satisfying some informational constraints.

    Moreover, there exist numerous applications in which the information transparency in a network is regulated by a central authority or principal. In these scenarios, the principal would seek to manage information transparency in such a way to induce an optimal outcome with respect to a performance metric of interest, which varies depending on the application. The means by which the principal controls the transparency can also change depending on the setup. For instance, the principal may be able to dictate who receives the information being disseminated \cite{26}, or what portion of the players should receive information \cite{1}.
    
    Regarding the performance metric by which the outcome is measured, the social welfare, defined as the aggregated payoffs of all players, is the most well-studied. In \cite{30}, it is argued that an intermediate level of transparency is optimal. In \cite{2,3}, a principal in a two-stage game aims to maximize the social welfare by determining the precision of public and private information available to agents. Another objective which would be sought by the principal is to eliminate or minimize free riding, which refers to the state of inaction by a player who takes advantage of other players' strategies and benefits from services or resources without paying for it. To the best of our knowledge, the free riding problem has not been discussed in the context of transparency in the literature. However, there are different other approaches to address the free riding problem. Some remarkable examples include having an assurance contract \cite{10}, in which the participants together pledge to a contract that force them to contribute for the public good; providing subsidies to motivate participants to contribute \cite{11}; and imposing a form of taxation to penalize free riders \cite{12}. In addition to social welfare and free riding, the notion of privacy of the players is also of great importance in practice. As an example, in the security investment decision-making processes, the information about investments of a firm is to remain private, while a state of absolute privacy is not necessarily optimal \cite{8}. This trade-off results in an optimal transparency problem discussed in \cite{9}.

    Two other popular frameworks concerned with the concept of information disclosure are signalling games and cheap talk, where there are two types of players, so-called {\it sender} and {\it receiver} \cite{akyol:16,sayin:19}. In these settings, the sender has some private information and sends an information signal to the receiver, based on which the receiver takes action. The payoffs of both player types are determined by the receiver's action. Due to the conflicting preference of the two player types, the sender would try to persuade the receiver to take action in such a way that maximizes his payoff, while the receiver would aim to find true information in order to maximize his own payoff. In \cite{akyol2015privacy}, privacy concerns of the players in such an environment are also investigated. 

\subsection{Contributions}

    In most of the existing work on optimizing the performance of a network, e.g., maximizing the welfare or minimizing free riding, a government/principal intervention targeting payoff functions of the players is required, for which managing the information transparency is a fine alternative. The body of literature on the transparency management problem largely involves manipulating the accuracy of information, which amounts to some level of disinformation. In this paper, we introduce and investigate a transparency management paradigm, the so-called partial observability approach, where the principal remains truthful throughout while the players' payoff functions are kept intact. Our main contributions in this paper are as follows.

    \begin{enumerate}
	\item We introduce the partial observability approach for managing information transparency where the information becomes ambiguous but remains veracious. It is done by a principal through partitioning agents into blocks and announcing only the mean of latest actions of players in each block instead of announcing all latest actions. The problem then becomes one of finding the optimal partition with respect to a performance metric of interest. We should note that this approach is inspired by the authors' previous work \cite{15}, where only the mean of {\it all} players' latest taken actions are made available in every stage and the limiting behavior of the ensuing best response dynamics is investigated.
    
    \item Focusing on a security investment decision-making process and assuming that agents adopt best response strategies given the available information, we show that for any partition employed by the principal, agents' actions asymptotically converge to individual limits. These limits are shown to be unique in the sense that they are independent of the initial conditions.
    
    \item We present an algorithm finding an optimal or nearly-optimal partition with respect to various performance metrics. In doing so, noticing that the number of partitions of a set is superexponential to the cardinality of the set,
    we convert the optimization problem in the discrete space of partitions to a continuous space convex optimization problem which in turn can be solved in polynomial time. We then use a community detection algorithm to find the partition that is closest to the solution in the continuous space.
    

\end{enumerate}
\subsection{Paper Organization}

    The remainder of the paper is organized as follows. In section \ref{section:2}, we present preliminaries and notations that we shall use later on in the paper. The formulation of the problem including the game model is detailed in Section \ref{section:3}. Then, in Section \ref{section:4}, we carry out an analysis of the game, with a particular focus on the convergence of its best response dynamics. Having analyzed the game, we address the main problem of improving the network's performance via a proper partitioning by the principal in Section \ref{section:5}. The effectiveness of our solution to this main problem is examined in Section \ref{section:6} via numerical examples. Finally, Section \ref{conclusion} concludes the paper with remarks and possible directions for future research.

\section{Notions and Terminology} \label{section:2}
    
    In this section, we state basic notions along with the terminology and notation which we shall use in the rest of the paper.

\subsection{Set Theory}
    
    Given a non-empty finite set $V$, its cardinality, that is the number of its members, is denoted by $|V|$. A {\it partition} of $V$ is a collection of non-empty pairwise-disjoint subsets of $V$, whose union is equal to $V$. Each set within a partition is referred to as a {\it block}. The number of all possible partitions of a set $V$, with $|V|=n$, is the well-known Bell number $B_n$, which according to \cite{17} satisfies the following inequalities for sufficiently large $n$:
    \begin{equation}\label{No. partitions}
        \left(\frac{n}{e \ln n}\right)^n< B_n < \left(\frac{n}{e^{1-\epsilon} \ln n}\right)^n.
    \end{equation}
    The lower bound on $B_n$ in \eqref{No. partitions} shows that the number of partitions grows superexponentially fast with $n$.

\subsection{Graph Theory}
    
    A directed, weighted graph is represented by $G(V,E,\varphi)$ where $V=\{1,\ldots,n\}$ is the set of nodes, $E \subset V \times V$ is the set of edges, and $\varphi:E\rightarrow \mathbb{R}$ determines the weight of each edge. A generic edge from node $i$ to node $j$ and its weight are denoted by $e_{ij}$ and $\varphi_{ij}$, respectively. We define the out-neighbor set, or simply the neighbor set, of node $i$ as
    \begin{equation}
        N_i=\{ j \in V \,|\, e_{ij} \in E\}.
    \end{equation}

\subsection{Convex Optimization}
    
    Convex optimization is a special case of mathematical optimization that takes the following form:
    \begin{equation}
        \begin{array}{cc}
            \text{minimize}
            & \hspace{-1in} f(x)\vspace{.05in}\\
            \text{subject to}& g_i(x) \geq 0, ~ i = 1,\ldots,m \vspace{.05in}\\
            & h_i(x) = 0,~ i =1, \ldots, p
        \end{array}
    \end{equation}
    where the objective function $f$ is a convex function and the feasible set, formed by the points satisfying the constraints, is a convex set. The convergence of algorithms solving convex optimization problems often requires a so-called {\it self-concordant} assumption. Self-concordant functions are those satisfying the following condition for any $x$:
    \begin{equation}\label{eq:20}
        |f^{'''}(x)| \leq 2f^{''}(x)^{\frac{3}{2}},
    \end{equation}
    where $f^{''}$ and $f^{'''}(x)$ are the second-order and third-order derivatives of $f$, respectively, which are assumed to exist. It is well-known that a large class of convex optimization problems are solvable in polynomial time \cite{16}.

\subsection{Notation}
    
    Given a matrix $W$, $W_i$ is the $i\ts{th}$ row and $W_{ij}$ is the element at the intersection of the $i\ts{th}$ row and $j\ts{th}$ column. The notation $W \geq 0$ is to be understood element-wise, while $W \succeq 0$ means that $W$ is positive-semidefinite. For a vector $v \in \mathbb{R}^{n}$, the operator $\max(0,v)$ is assumed to operate element-wise. $diag(v)$ is a diagonal matrix with elements of $v$ as its diagonal elements. $I$ and $\mathbf{1}$ denote the identity matrix and the vector of all ones, respectively, of proper orders. The norm $\|.\|$ indicates the $\infty$-norm, while the superscript $T$ on a vector or matrix stands for the transposition operation. 

\section{Problem Setup}\label{section:3}
    
    In this section, we describe the game model that we consider in this work as well as the partial observability approach and the formulation of the optimal transparency problem in detail.

\subsection{The Game Model}\label{the game model}

    The game model we focus on is the network aggregative game of \cite{13} pertaining to the security investment decision-making process of interconnected firms. In this model, $ V=\{1,\ldots,n\}$ is the set of players, $x_i$, $i \in V$, that has to be non-negative, denotes the action of each player, and $U_i (x_i,x_{-i})$ is its payoff function defined as


    \begin{equation} \label{eq:1}
        U_i (x_i,x_{-i}) = {S_i} (W_ix ) -c_ix_i,
    \end{equation}
    where $c_i$ is a positive constant, $W$ is a fixed $n \times n$ matrix whose diagonal elements are all equal to 1, and ${S_i}$ is a strictly concave, strictly increasing function which has the following properties:
    \begin{equation}
        {S_i}'(0) > c_i,~~ \lim_{x \rightarrow \infty} {S_i}'(x) < c_i.  
    \end{equation}
    One could interpret $W_ix$ as the effective investment of player $i$, ${S_i} (W_ix)$ as its pure payoff, and $c_i x_i$ as the expense of its investment $x_i$. The interested reader is referred to \cite{13} for a detailed interpretation of the payoff functions.

    Matrix $W-I$, whose diagonal elements are all zero, can be viewed as the weighted adjacency matrix of a graph $G(V,E,\varphi)$, which in essence indicates the interconnections among the players. This game is assumed to be played repeatedly and infinitely often. At each stage of the game, in the fully transparent setting, a principal would announce the latest actions of all players publicly. Then, each player, adopting the best response strategy, tries to maximize its payoff given the publicly available information, leading to the following best response dynamics \cite{13}:
    \begin{equation}
        x_i(t+1) = \max(0,({I_i}-{W_i})x(t)+b_i).
    \end{equation}
    or the equivalent vector form \cite{14}
    \begin{equation} \label{eq:2}
        x(t+1)=\max(0,(I-W)x(t)+b),
    \end{equation}
    where $b_i$ is the unique value which satisfies $S'(b_i) = c_i$, meaning that it is the optimal investment amount of player $i$ in the absence of any other player (that is, in a one-player game). It is shown in \cite{13} that if $W$ is diagonally dominant, that is if
    \begin{equation}\label{eq:30}
        \sum_{j\in N_i} w_{ij} < w_{jj} = 1,~ \forall i \in V,
    \end{equation}
    then the best response dynamics \eqref{eq:2} will asymptotically converge to the unique Nash-equilibrium of the game. Similar to \cite{13}, we also make the diagonal dominance assumption throughout the paper.
    \begin{assumption}\label{diagonal dominance}
        The matrix $W$ is diagonally dominant, which means that it satisfies \eqref{eq:30}. Equivalently, $I-W$ satisfies
        \begin{equation}\label{I-W norm}
            \|I-W\| < 1.
        \end{equation}
    \end{assumption}

\subsection{Partial Observability Approach}

    We present a framework for information announcement that does not taint its veracity, is public, and is capable of addressing the trade-off between gains from transparency and privacy protection. This framework involves a trusted principal aware of every player's latest action $x_i(t)$, $i \in V$. The principal partitions the players into a number of fixed blocks,
    \begin{equation}
        V_1,\ldots,V_m,
    \end{equation}
    and, at each stage of the game, calculates the average of latest taken actions in each block $V_k$, that is
    \begin{equation}
        \frac{1}{|V_k|}\sum_{i\in V_k} x_i,~k=1,\ldots,m,
    \end{equation}
    and announces it publicly.

\subsection{Best Response Dynamics Under Partial Observability}
     Let $p$ denote the partition employed by the principal and $I_p(x)$ denote the information announced by the principal with regard to $x$ given partition $p$. As stated previously, each player is assumed to adopt the best response strategy, which in the non-fully transparent case leads to the following best response dynamics for the game:
    \begin{equation}\label{BR}
        x(t+1)=BR(x(t)|I_p(x(t)),
    \end{equation}
    where $BR$ indicates the adopted best response strategy. In order for players to best respond to the state of the game given the available information $I_p(x(t))$, they first estimate other players' actions at time $t$, in particular those of their neighbors. Noticing that the available information with regard to $x_i(t)$ is the mean value of actions at time $t$ in the block containing $i$, we reasonably assume in this work that the players estimate $x_i(t)$ with that mean value. Thus, partition $p$ employed by the principal directly influences the estimates of players' actions, and consequently, the best response dynamics and the evolution of actions depend on the partition $p$. We will delve deeper into this modified best response dynamics and its formulation in Section \ref{section:4}.
    

\subsection{Problem Definition} \label{problem definition}

    Having introduced the framework for information announcements and the best response dynamics, one has to now address the convergence properties of the resulting best response dynamics of the game. More precisely, given an arbitrary partition $p$, will the best response dynamics converge to an equilibrium and is this equilibrium unique in the sense that it is independent of the initial conditions? In other words, will the players' actions evolving under the best response dynamics asymptotically converge to individual limits, and if so, are they independent of the initial actions? One further wonders how these limits are characterized, and in particular, how they vary with respect to the partition $p$.

    A more important problem to address is that which of the possible partitions is optimal with respect to a given performance metric for the network. More precisely, assuming that for each partition $p$, there is a unique equilibrium $x^*_p$ for the best response dynamics of the game, and a performance metric is given as a function of $x^*_p$, say $J(x^*_p)$, one seeks a partition $p$ optimizing the performance, that is
    \begin{equation}
        \argmax_{p\in P}~J(x^*_p),
    \end{equation}
    where $P$ denotes the set of all partitions. For simplicity, we may drop the subscript $p$ and use $x^*$ for the equilibrium when no ambiguity results. Two different performance 
    metrics are of particular interest in this work. One metric is the social welfare defined as
    \begin{equation}
        \sum_{i \in V} U_i(x^*_i,x^*_{-i}),
    \end{equation}
    and another one is (the negation of) the aggregated free riding index of all or some of the players. The free riding index of a player $i$ in \cite{13} is defined as
    \begin{equation}\label{free riding index}
        \gamma_i(x^*)=\frac{W_i x^* -x^*_i}{b_i}.
    \end{equation}
    Remembering that $b_i$ can be interpreted as the player $i$'s default investment, that is if the player was isolated from the network, and $W_i x^* -x^*_i$ is the effective investment amount gained by the player for being in the network, $\gamma_i(x^*)$ in \eqref{free riding index} very well captures how much of a free ride the player is taking, with smaller indices corresponding to less free riding.


    Finally, one can further elaborate and address the issue of privacy protection by adding a constraint to the optimization problem, that is imposing a lower bound on the cardinality of each block in the partition $p$. This constraint is inspired by the intuition that larger blocks lead to more ambiguous information made public and are therefore more protective of their members' private information.
    

\section{Best Response Dynamics Analysis} \label{section:4}

    In this section, we characterize the best response dynamics \eqref{BR} of the game presented in Section \ref{section:3} with partial observability and investigate its convergence properties.


    Given the most recent information announced by the principal, which pertains to $x(t)$, players first estimate other players' latest actions, their neighbors' in particular, to then take actions at $t+1$ according to the best response strategy. Recalling that (i) each player belongs to exactly one block and (ii) the only public information available regarding that block is the mean of latest actions by its members, the player's latest taken action is estimated by the announced mean of its containing block.

    This estimation of $x(t)$ by a player solely relies on the information announced publicly. One notices that each player $i$ also possesses some private information, that is indeed the value of $x_i(t)$. However, we assume that the number of players is sufficiently large that the impact of this private information is negligible in estimating $x(t)$ as a whole. Consequently, a uniform estimate $\tilde{x}_p(t)$ of $x(t)$ is derived, where the subscript $p$ indicates the partition employed by the principal. By the ``uniform'' estimate, we mean that for every player $j$, $(\tilde{x}_p)_j(t)$ is the estimate of $x_j(t)$ by every player $i$, $i \neq j$.
    
    The vector $\tilde{x}_p(t)$ can be formulated as $H_px(t)$, where $H_p$ is a symmetric $n \times n$ matrix constructed from the partition $p$ as the following. Precisely, if $l_i$ denotes the size of the block containing player $i$, the matrix $H_p$ is constructed according to
    \begin{equation}\label{H_p}
        (H_p)_{ij} =
        \begin{cases}
            l_i^{-1} & \text{if $i$ and $j$ belong to the same block of $p$},\\
            0 & \text{otherwise.}
        \end{cases}
    \end{equation}
    Now, in view of equations \eqref{eq:1} and \eqref{eq:2}, one can write the following best response dynamics:
    \begin{equation}\label{eq:16}
        \begin{array}{ll}
            x(t+1)
            & \hspace{-.1in}=\max(0,(I-W)\tilde{x}_p(t)+b)\vspace{.05in}\\
            &
            \hspace{-.1in}= \max(0,(I-W)H_px(t)+b).
        \end{array}
    \end{equation}

    As it is evident from \eqref{eq:16}, the partition $p$ influences the evolution of players' actions. Thus, one aims to find an optimal partition $p$ with respect to a desired performance metric. As we are interested in performance metrics involving the steady-state behaviors of players in this work, we first show that for any partition $p$, each player's action converges as time grows.

    \begin{theorem}\label{theorem1}
        Given any partition $p$ employed by the principal and the best response dynamics \eqref{eq:16}, $\lim_{t \rightarrow \infty} x(t)$ exists and is independent of the initial vector $x(0)$ of actions.
    \end{theorem}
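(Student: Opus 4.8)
The plan is to recast the recursion \eqref{eq:16} as iteration of a single map and show that this map is a contraction with respect to the $\infty$-norm, after which the Banach fixed-point theorem delivers both existence of the limit and its independence from $x(0)$. Concretely, define $F_p:\mathbb{R}^n\to\mathbb{R}^n$ by $F_p(x)=\max\bigl(0,(I-W)H_px+b\bigr)$, so that \eqref{eq:16} reads $x(t+1)=F_p(x(t))$.

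First I would record two elementary facts. (i) The matrix $H_p$ from \eqref{H_p} is row-stochastic: row $i$ has exactly $l_i$ nonzero entries, each equal to $l_i^{-1}$, hence its entries are nonnegative and each row sums to $1$; therefore its induced $\infty$-norm satisfies $\|H_p\|=1$. (ii) The element-wise operation $v\mapsto\max(0,v)$ is nonexpansive in the $\infty$-norm, since for real scalars $|\max(0,a)-\max(0,b)|\le|a-b|$, and taking the maximum over coordinates gives $\|\max(0,u)-\max(0,v)\|\le\|u-v\|$ for all $u,v\in\mathbb{R}^n$. Combining (i), (ii), submultiplicativity of the induced norm, and Assumption \ref{diagonal dominance} (in the form \eqref{I-W norm}) yields, for any $x,y\in\mathbb{R}^n$,
\begin{equation*}
    \|F_p(x)-F_p(y)\|\le\|(I-W)H_p(x-y)\|\le\|I-W\|\,\|H_p\|\,\|x-y\|=\|I-W\|\,\|x-y\|,
\end{equation*}
so $F_p$ is a contraction with modulus $\|I-W\|<1$.

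The Banach fixed-point theorem then gives a unique fixed point $x^*_p$ of $F_p$ and guarantees $x(t)\to x^*_p$ for every initial vector $x(0)$, proving the claim; one may also note that $x^*_p\ge 0$ automatically, being in the range of $\max(0,\cdot)$, which is consistent with the nonnegativity constraint on actions. There is no substantial obstacle here: the only points needing care — rather than real difficulty — are verifying that $\max(0,\cdot)$ is $\infty$-norm nonexpansive and that $H_p$ has unit $\infty$-norm; the latter is precisely where the specific structure of the partition-induced averaging matrix enters, and it is what lets Assumption \ref{diagonal dominance} (originally used in \cite{13} for the fully transparent dynamics \eqref{eq:2}) carry over verbatim to every partition $p$.
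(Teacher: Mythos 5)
Your proposal is correct and follows essentially the same route as the paper: the same contraction estimate in the $\infty$-norm, built from the nonexpansiveness of $\max(0,\cdot)$, the row-stochasticity of $H_p$ giving $\|H_p\|=1$, and $\|I-W\|<1$ from Assumption \ref{diagonal dominance}. The only cosmetic difference is that you invoke the Banach fixed-point theorem explicitly, whereas the paper argues convergence from the inequality on consecutive iterates and then proves uniqueness of the equilibrium by a separate (identical) contraction estimate.
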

    \begin{proof}
        From \eqref{eq:16}, one has
        \begin{equation}\label{eq:19}
            \begin{array}{l}
                \| x(t+2)-x(t+1)\|\vspace{.05in}\\
                ~~~~=\|\max(0,(I-W)H_px(t+1)+b)\vspace{.05in}\\
                ~~~~~~-\max(0,(I-W)H_px(t)+b)\|\vspace{.05in}\\
                ~~~~\leq \|((I-W)H_px(t+1)+b)-((I-W)H_px(t)+b)\|\vspace{.05in}\\
                ~~~~=\|((I-W)H_p)(x(t+1)-x(t))\|\vspace{.05in}\\
                ~~~~\leq \|I-W\| ~\|H_p\|~ \|(x(t+1)-x(t)\|\vspace{.05in}\\
                ~~~~=\gamma \|x(t+1)-x(t)\|,
            \end{array}
        \end{equation}
        where $\gamma=\|I-W\| < 1$ according to \eqref{I-W norm}, which proves the convergence of $x(t)$ by contraction. We also note that to write the first inequality in \eqref{eq:19}, we took advantage of the following inequality that holds for any $a,b \in \mathbb{R}$:
        \begin{equation}
            |\max(0,a) - \max(0,b)| \leq |a-b|,
        \end{equation}
        which can be easily proved by considering all four cases for the signs of $a$ and $b$. Furthermore, in the last equality in \eqref{eq:19}, we replaced $\|H_p\|$ by 1, since $H_p \geq 0$ and each row of $H_p$ sums up to 1.


        To show that the equilibrium point is independent of the initial vector $x(0)$ of actions, let $x^*_1$ and $x^*_2$ be two equilibria given the dynamics \eqref{eq:16}. Therefore, we must have
        \begin{equation}
            x^*_1=\max(0,(I-W)H_px^*_1+b),
        \end{equation}
        \begin{equation}
            x^*_2=\max(0,(I-W)H_px^*_2+b).
        \end{equation}
        Now starting from $\|x^*_1 - x^*_2\|$ and following the same lines of argument as in \eqref{eq:19}, we arrive at the following inequality:
        \begin{equation}
            \begin{array}{l}
                \|x^*_1-x^*_2\|\vspace{.05in}\leq \gamma \|x^*_1-x^*_2\|,
            \end{array}
        \end{equation}
        for $\gamma < 1$, which immediately results in $x^*_1=x^*_2$. This proves the uniqueness of the equilibrium given partition $p$ and the proof of the theorem is now complete.
    \end{proof}
    Having shown that there is a unique equilibrium for each partition, we next characterize this equilibrium more precisely.
    \begin{theorem}
        Let $p$ be the partition employed by the principal, and $H_p$ be constructed from $p$ according to \eqref{H_p}. Then, the unique equilibrium of the best response dynamics \eqref{eq:16} satisfies the following linear complementarity conditions:
        \begin{equation}\label{eq:6}
            \begin{cases}
                y=(I-(I-W)H_p)x^*-b\\
                y^Tx^*=0\\
                y\geq 0 , x^*\geq 0.\\
            \end{cases}
        \end{equation}
    \end{theorem}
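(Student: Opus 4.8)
The plan is to reduce the fixed-point characterization of the equilibrium established in Theorem~\ref{theorem1} to a coordinatewise statement, and then invoke a one-dimensional equivalence between a projection identity and a complementarity condition. By Theorem~\ref{theorem1}, the unique equilibrium $x^*$ of \eqref{eq:16} satisfies $x^* = \max(0,(I-W)H_px^*+b)$. First I would introduce the slack vector $y := (I-(I-W)H_p)x^*-b$, which is precisely the first line of \eqref{eq:6}; then $(I-W)H_px^*+b = x^*-y$, and the fixed-point equation rewrites as the coordinatewise identity $x^* = \max(0,\,x^*-y)$.

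The key lemma to establish is the scalar claim: for $\xi,\eta\in\mathbb{R}$, one has $\xi = \max(0,\xi-\eta)$ if and only if $\xi\geq 0$, $\eta\geq 0$, and $\xi\eta=0$. The forward direction splits according to the sign of $\xi-\eta$ (when $\xi-\eta\geq 0$ the identity forces $\eta=0$ and $\xi\geq 0$; when $\xi-\eta<0$ it forces $\xi=0$, whence $\eta>0$), and the converse is immediate since $\max(0,\cdot)$ acts as the identity on the nonnegative reals. Applying this coordinatewise to $x^* = \max(0,x^*-y)$ yields $x^*\geq 0$, $y\geq 0$, and $x_i^*y_i=0$ for every $i$. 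Because both vectors are nonnegative, summing the last family of identities gives $y^Tx^*=0$; conversely $y^Tx^*=0$ together with $x^*,y\geq 0$ recovers the coordinatewise complementarity. Hence all three lines of \eqref{eq:6} hold.

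For completeness I would also note that the argument is reversible: any pair $(x^*,y)$ solving \eqref{eq:6} satisfies $x^* = \max(0,x^*-y) = \max(0,(I-W)H_px^*+b)$ by the same scalar equivalence, so it is an equilibrium of \eqref{eq:16}, and therefore, by Theorem~\ref{theorem1}, the system \eqref{eq:6} is itself uniquely solvable with solution $x^*$. I do not anticipate a genuine obstacle here; the only delicate point is the bookkeeping that connects the aggregate condition $y^Tx^*=0$ with the coordinatewise complementarity needed to apply (and to invert) the scalar lemma, and this is handled entirely by the nonnegativity constraints.
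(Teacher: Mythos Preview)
Your argument is correct and follows essentially the same route as the paper: both start from the fixed-point identity $x^*=\max(0,(I-W)H_px^*+b)$, introduce the same slack vector $y$, and perform a coordinatewise case split on the $\max$ to obtain $x^*\ge 0$, $y\ge 0$, and $x_i^*y_i=0$. Your substitution $(I-W)H_px^*+b=x^*-y$ and the explicit scalar lemma are a cleaner packaging of the same case analysis, and your added converse (that any solution of \eqref{eq:6} is a fixed point, hence equals the unique equilibrium) is a nice bonus the paper omits.
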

    \begin{proof}
        If $x^*$ is the unique equilibrium of the best response dynamics \eqref{eq:16}, one must have
        \begin{equation}\label{eq:18}
            x^* = max(0,(I-W)H_px^*+b).
        \end{equation}
        Therefore,
        \begin{equation}
           \begin{cases}
                x^* \geq 0\\
                x^* \geq (I-W)H_px^*-b.\\
            \end{cases}   
        \end{equation}
        Thus, one can write
        \begin{equation}
            \begin{cases}
                x^*\geq0\\
                (I-(I-W)H_p)x^*-b\geq0\\
                y=x^*(I-(I-W)H_p)-b,
            \end{cases}   
        \end{equation}
        where $y \geq 0$. Moreover, from \eqref{eq:18}, for every $i$ we have
        \begin{equation}
            \begin{cases}
                x_i^*=0\vspace{.05in}\\
                \text{or}\vspace{.05in}\\
                y_i = \big((I-(I-W)H_p)x^*\big)_i-b_i=0,
            \end{cases}
        \end{equation}
        which results in $y^Tx^* = 0$. Hence, all of linear complementarity conditions \eqref{eq:6} are satisfied.
    \end{proof}
    We have shown thus far that the equilibrium of the best response dynamics \eqref{eq:16} exists, is unique, and satisfies the linear complementarity conditions \eqref{eq:6}. Recalling that our main objective is finding an optimal partition $p$ with respect to a given performance metric, which here is a function of the unique equilibrium of \eqref{eq:16}, we now present an approximation of the equilibrium that we shall use in the rest of the paper.

    Assuming $x^* > 0$, from \eqref{eq:6} we have $y = 0$, and therefore $x^*$ is obtained as
    \begin{equation}\label{eq:8}
        x^*=(I-(I-W)H_p)^{-1} b.
    \end{equation}
    Noticing that
    \begin{equation}
        \|(I-W)H_p\| \leq \|(I-W)\|\, \|H_p\| < 1, 
    \end{equation}
    we use the first two terms of the Neumann series for $(I-(I-W)H_p)^{-1}$ to approximate it, that is $I + (I-W)H_p$. Hence, in case $x^*> 0$, we have the following approximation for $x^*$:
    \begin{equation}\label{eq:7}
        x^* \approx (I+(I-W)H_p)b.
\end{equation}
One notices that for the approximation \eqref{eq:7} to be more accurate, $\|(I-W)H_p\|$ or $\|I-W\|$ should be small. 
Since \eqref{eq:7} is valid only if $x^* > 0$, we present a sufficient condition for $x^* > 0$, which is reasonably weak for small $\|I-W\|$.
\begin{theorem}\label{x>0}
For the unique equilibrium $x^*$ of dynamics \eqref{eq:16}, we have $x^* > 0$ if
\begin{equation}\label{b condition}
    \min_{j\in V}b_j
    >
    \frac{\|I-W\|}{1-\|I-W\|}\, \max_{j\in V} b_j.
\end{equation}
\end{theorem}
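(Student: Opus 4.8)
The plan is to derive an a priori bound on $\|x^*\|$ directly from the fixed-point characterization of the equilibrium, and then to check, coordinate by coordinate, that the argument of the $\max$ defining $x^*$ is strictly positive whenever \eqref{b condition} holds.

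First I would record that $b > 0$ componentwise: since each $S_i$ is strictly increasing and strictly concave, $S_i'$ is positive and strictly decreasing, so the sign conditions $S_i'(0) > c_i > \lim_{x\to\infty} S_i'(x)$ force the unique solution $b_i$ of $S_i'(b_i) = c_i$ to satisfy $b_i > 0$; in particular $\|b\| = \max_{j\in V} b_j$. Next, applying the elementary inequality $|\max(0,a)-\max(0,b)| \le |a-b|$ (equivalently $\|\max(0,v)\| \le \|v\|$, already used in the proof of Theorem~\ref{theorem1}) to the equilibrium identity $x^* = \max(0,(I-W)H_p x^* + b)$ gives
\begin{align*}
\|x^*\| &\le \|(I-W)H_p x^* + b\| \le \|I-W\|\,\|H_p\|\,\|x^*\| + \|b\|\\
&= \|I-W\|\,\|x^*\| + \max_{j\in V} b_j,
\end{align*}
using $\|H_p\| = 1$ (as $H_p \ge 0$ with unit row sums). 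Since $\|I-W\| < 1$ by Assumption~\ref{diagonal dominance}, rearranging yields $\|x^*\| \le \dfrac{\max_{j\in V} b_j}{1 - \|I-W\|}$.

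The main step is then a coordinatewise estimate. Fix $i \in V$; at equilibrium \eqref{eq:16} reads $x^*_i = \max\big(0,\big((I-W)H_p x^*\big)_i + b_i\big)$, so it suffices to show the inner expression is strictly positive. Bounding below, using $|v_i| \le \|v\|$ and submultiplicativity $\|(I-W)H_p\| \le \|I-W\|\,\|H_p\| = \|I-W\|$,
\begin{align*}
\big((I-W)H_p x^*\big)_i + b_i &\ge b_i - \big\|(I-W)H_p x^*\big\| \ge \min_{j\in V} b_j - \|I-W\|\,\|x^*\|\\
&\ge \min_{j\in V} b_j - \frac{\|I-W\|}{1-\|I-W\|}\max_{j\in V} b_j,
\end{align*}
which is strictly positive exactly by hypothesis \eqref{b condition}. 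Hence $x^*_i > 0$, and since $i$ was arbitrary, $x^* > 0$, completing the argument.

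I do not anticipate a genuine obstacle: the estimates are routine contraction-type bounds. The only points requiring a little care are confirming $b > 0$ (so that $\|b\|$ equals $\max_j b_j$ rather than involving absolute values) and keeping the norm bookkeeping consistent with the $\infty$-norm convention, both of which are already implicit in the earlier parts of this section. One sanity check worth noting is that as $\|I-W\| \to 0$ the hypothesis degenerates to $\min_j b_j > 0$, which always holds, matching the remark that the condition is weak for small $\|I-W\|$.
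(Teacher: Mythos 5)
Your proof is correct, and it takes a genuinely different route from the paper's. The paper starts from the closed-form expression $x^*=(I-(I-W)H_p)^{-1}b$ of \eqref{eq:8} and expands the inverse in a Neumann series, bounding each coordinate below by $\min_{j}b_j-\frac{\|I-W\|}{1-\|I-W\|}\max_j b_j$. You instead work directly with the nonlinear equilibrium equation $x^*=\max(0,(I-W)H_px^*+b)$: first an a priori contraction-type bound $\|x^*\|\le \max_j b_j/(1-\|I-W\|)$, then a coordinatewise lower bound showing the argument of the $\max$ is strictly positive, so the $\max$ is inactive in every coordinate. The quantitative estimate is the same in both arguments, but yours is logically more self-contained: formula \eqref{eq:8} was itself derived in the paper under the assumption $x^*>0$, so using it to prove $x^*>0$ strictly requires a small bootstrapping step (show $z=(I-(I-W)H_p)^{-1}b$ is positive, note that $z$ then satisfies the fixed-point equation of \eqref{eq:16} with the $\max$ inactive, and invoke the uniqueness from Theorem~\ref{theorem1} to conclude $x^*=z$) which the paper leaves implicit. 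Your argument bypasses this entirely at the cost of one extra routine norm bound, while the paper's, once the uniqueness step is supplied, is slightly shorter and additionally yields the explicit series representation of $x^*$. Your remark that $b>0$ (so that $\|b\|=\max_j b_j$ in the $\infty$-norm) is also needed, and used only implicitly, in the paper's proof.
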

\begin{proof}
According to \eqref{eq:8} and the Neumann series, for any $i\in V$ we have
    \begin{align}
        x^*_i
        & =\big((I-(I-W)H_p)^{-1}\big)_ib\nonumber\\
        & = b_i + \sum_{m=1}^{\infty}\big((I-W)H_p\big)_i^mb\nonumber\\
        & \geq \min_{j\in V}b_j - \sum_{m=1}^{\infty} \|(I-W)\|^m\, \|H_p\|^m \max_{j\in V}b_j\nonumber\\
        & = \min_{j\in V}b_j - \frac{\|I-W\|}{1-\|I-W\|} \max_{j\in V}b_j\nonumber\\
        & > 0,
    \end{align}
where to write the last inequality, we took advantage of the condition \eqref{b condition}.
\end{proof}

\section{Finding an Optimal Partition}\label{section:5}

    Recalling \eqref{No. partitions}, the number of partitions of a set with cardinality $n$ is at least $\left(\frac{n}{e \ln n}\right)^n$, which makes the problem of finding the solution of     \begin{equation}
        \argmax_{p\in P}~J(x^*_p)
    \label{opt problem}
    \end{equation}
    very inefficient for large $n$ if an exhaustive search is conducted. In this section, for two performance metrics, we present algorithms finding an optimal or nearly-optimal partition in polynomial time. We shall convert the discrete optimization problem into a continuous one so that various continuous optimization techniques can be applied. After obtaining a solution in the continuous space, a community detection algorithm will be utilized to find the nearest point in discrete space of all partitions.
    
    We recall that any partition $p$ corresponds to a unique matrix $H_p$ according to \eqref{H_p}. It should be clear that $H_p$ is always doubly-stochastic, meaning that $H_p \geq 0$ and each row/column of $H_p$ sums up to 1. One also notices that $H_p$ is positive-semidefinite since it is symmetric and satisfies
    \begin{equation}
        H_p=H_p^2 = H_p^TH_p,
    \end{equation}
    which means that for any $v \in \mathbb{R}^n$,
    \begin{equation}
        v^T H_p v = v^TH_p^TH_pv = (H_pv)^TH_pv \geq 0.
    \end{equation}
    We now extend the discrete space $P$ of partitions, or their corresponding doubly-stochastic positive-semidefinite matrices, to the convex set of all doubly-stochastic positive-semidefinite matrices to solve the optimization problem \eqref{opt problem}.

    In the following two subsections, we consider two different optimization objectives and find an optimal solution in the aforementioned continuous space accordingly. Then, we address the issue of privacy by adding a constraint to the optimization problems and solving these modified problems. Finally, in the last subsection, we use a community detection algorithm to derive partitions corresponding to the optimal solutions obtained.  
    The optimization objectives that we consider are (i) maximizing social welfare and (ii) minimizing free riding done by a player or a group of players. A combination of these optimization objectives is as well of interest although it is not discussed in this paper.
    
\subsection{Social Welfare}

    The social welfare metric is defined as the aggregated payoff of all players when the unique equilibrium is reached
    , i.e.,
    \begin{equation}
        \text{welfare}=        \sum_{i \in V} U_i(x^*_i,x^*_{-i})=\sum_{i=1}^{n} {S_i}(W_ix^*)-c_ix^*_i.
    \end{equation}
    Based on \eqref{eq:7}, we use the approximated vector $(I+(I-W)H_p)b$ for $x^*$ to find a partition $p$, or equivalently $H_p$, that maximizes welfare. For now, as explained before, we assume that $H_p$ can be any doubly-stochastic positive-semidefinite matrix generically denoted by $H$ and aim to solve the following optimization problem:
    \begin{align}
        &\underset{H}{\argmax}
        ~ \text{welfare}(H)=\sum_{i=1}^{n}{S_i}(W_i(I+(I-W)H)b)\nonumber\\
        &\hspace{1.5in} -c_i(I+(I-W)H)_ib\nonumber\\
        &~~~~~~~~\text{subject to}~
        \begin{cases}
            H\succeq 0\vspace{.05in}\\
        H\,\mathbf{1}=\mathbf{1}\vspace{.05in}\\
        \mathbf{1}^TH = \mathbf{1}^T
        \end{cases}\label{eq:10}
    \end{align}
    The welfare function is concave with respect to $H$ since each player's payoff function is concave. Furthermore, the set of doubly-stochastic positive-semidefinite matrices is convex, which means that the feasible set is convex. Thus, \eqref{eq:10} is a convex optimization problem that can be solved in polynomial time. It should be noted that the optimal solution, denoted by $H^*$, does not immediately correspond to a partition $p$. We later on in Subsection \ref{section:5.4} make explicit how a partition $p$ can be obtained in such a way that $H_p$ is closest to $H^*$.

\subsection{Free Riding}

    Another important performance metric for the network is the amount of free riding by a player or a collection of players. A formulation of free riding is given in \eqref{free riding index} that is suitable for the fully transparent case. For the case with partial observability, we introduce the following free riding metric:
    \begin{equation}\label{eq:13}
        \eta_i=\frac{b_i  - x^*_i}{b_i},
    \end{equation}
    one notices that the metric \eqref{eq:13} becomes identical to \eqref{free riding index} in the fully observable case, which is when all blocks of the partition $p$ are singletons. Aiming to minimize the total amount of free riding by all players, the following convex optimization problem is to be addressed:
    \begin{align}
        &\underset{H}{\argmin}
        \sum_{i \in V}\eta_i = \mathbf{1}^T diag(b)^{-1}(b-(I+(I-W)H)b)\nonumber\\ 
        &~~~~~~~~\text{subject to}~
        \begin{cases}
            H\succeq 0 \vspace{.05in}\\
            H\,\mathbf{1}=\mathbf{1}\vspace{.05in}\\
                \mathbf{1}^TH = \mathbf{1}^T
        \end{cases}\label{eq:11}
    \end{align}
    We also point out that a similar convex optimization problem can be written for any subset of players whose total amount of free riding is sought to be minimized. Once again, we leave the part where we find the partition $p$ corresponding to the optimal solution $H^*$ of \eqref{eq:11} to Subsection \ref{section:5.4}.

\subsection{Privacy Constraint}\label{section:5.3}

    When performing the partitioning of the players, the principal could as well take the privacy of the players into consideration. To incorporate the notion of privacy into our problem formulation, we notice that larger partition blocks can be viewed as more protective of their members' privacy since the members' investment values become more obscure and more uncertain. Therefore, a certain degree of privacy can be translated to a lower bound on the cardinality of each block. Since this lower bound constraint shrinks the feasible set of partitions, it can worsen the achievable performance of the network. Hence, there is indeed a trade-off between protecting privacy and the achievable performance.


    The lower bound constraint on the cardinality of each block is in regard to the discrete optimization problems. Therefore, one has to now translate this constraint to one on the matrix $H$ in the space of doubly-stochastic positive-semidefinite matrices. Noticing that a partition $p$ with a lower bound $L$ on the cardinality of its blocks corresponds to $H_p$ all of whose elements are at most $1/L$, the constraint $H \leq 1/L$, which is to be understood element-wise, is added to the optimization problems \eqref{eq:10} and \eqref{eq:11}. It should be clear that the feasible set remains convex with the added constraint. The resulting convex optimization problems are
    \begin{align}
            &\underset{H}{\argmax}
            ~ \text{welfare}(H)=
            \sum_{i=1}^{n}{S_i}(W_i(I+(I-W)H)b)\nonumber\\
            & \hspace{1.5in}-c_i(I+(I-W)H)_ib\nonumber\\
        &~~~~~~~~\text{subject to}~
        \begin{cases}
            H\succeq 0 \vspace{.05in}\\
            H \leq 1/L\vspace{.05in}\\
            H\,\mathbf{1}=H^T\,\mathbf{1}=\mathbf{1}
            \end{cases}
        \label{eq:10 with privacy}
    \end{align}
and
    \begin{align}
        &\underset{H}{\argmin}\sum_{i \in V}\eta_i=\mathbf{1}^T diag(b)^{-1}(b-(I+(I-W)H)b)\nonumber\\ 
        &~~~~~~~~\text{subject to}~\begin{cases}
                H\succeq 0 \vspace{.05in}\\
                H \leq 1/L\vspace{.05in}\\
                H\,\mathbf{1}=H^T\,\mathbf{1}=\mathbf{1}
            \end{cases}\label{eq:11 with privacy}
    \end{align}
\subsection{From the Optimal Solution $H^*$ to a Partition} \label{section:5.4}

    Having obtained the solution $H^*$ of either of the optimization problems \eqref{eq:10}, \eqref{eq:11}, \eqref{eq:10 with privacy}, or \eqref{eq:11 with privacy}, a partition $p$ for which $H_p$ is closest to $H^*$ is now desired. To find $p$, we perform a community detection algorithm proposed in \cite{18} on the graph whose adjacency matrix is $H^*$. This graph is undirected since $H^*$ is symmetric. In \cite{18}, a function is introduced that quantifies the fitness of any given cluster $\varsigma$ in the graph, i.e.,
    \begin{equation}
        f_{\varsigma}=\frac{k_{in}^{\varsigma}}{(k_{in}^{\varsigma}+k_{out}^{\varsigma})^{\alpha}},
    \end{equation}
    where $k_{in}^{\varsigma}$ is the total in-cluster degrees, which is the summation of weights of all edges with both ends inside $\varsigma$. Moreover, $k_{out}^{\varsigma}$ is the total cross-cluster degrees, which is the summation of weights of all edges with one end inside $\varsigma$ and one end outside it. We use the algorithm presented in \cite{18} with $\alpha=1$ to find a partition of nodes, blocks of which have the highest total fitness. It should be noted that the complexity of this algorithm is $O(n^2)$.

\section{Numerical Examples}\label{section:6}
    
    In this section, we provide various examples to demonstrate the results from the previous sections via simulations. In the first example, for a network of 10 agents, we find through an exhaustive search the partition that minimizes the total free riding. Then, we utilize the algorithm from the previous section to find a partition among all possible partitions expected to minimize the total free riding. We will see that the two obtained partitions are indeed identical, which confirms the effectiveness of our algorithm. In the second example, we repeat the steps of the first example for the same network, however in the presence of privacy constraints. In the final example, we consider a network of 50 agents, for which an exhaustive search to find an optimal partition is computationally too complex, to show that our algorithm can still find the desired partition in reasonable time. We assume throughout this section that
    \begin{equation}\label{utility-function}
       {S_i}(x)=200\sqrt{x},~\forall i \in V,
    \end{equation}
    which satisfies the conditions for the payoff functions given in Subsection \ref{the game model}.
    
    Given a network of 10 agents, let us generate a non-negative vector $b$ and a diagonally dominant matrix $W$, with 1 as all its diagonal elements, randomly in such a way to satisfy condition \eqref{b condition}:
    \begin{equation*}
        b=[773 \ \ 719  \ \ 152  \ \ 498  \ \ 990  \ \  170  \ \ 328 \ 727 \ \  621  \ \  627]^T, 
    \end{equation*}
    \begin{equation*}
        W=\begin{bmatrix}
            1 & .07 & .11&0  & .04 & 0 &.07  &.08  &.01&.08  \\
            .02 & 1 & .04&0  & .06 & .04 &0  &.01  &0&0\\
            0 & 0 & 1&0  & 0 & 0 &0  &0  &.01&.25\\
            0 & 0 & 0&1  & .06 & .02 &.18  &.03  & 0&.19\\
            0 & .07 & 0&.18  & 1 & 0 &0  &.05  &.01&.17 \\
            .04 &.03 & .01&.11  & 0 &1 &0  &0  &0&.18\\
            .02 & .11 & .06&0  & .08 &.05 &1  &0  &.07&.04\\
            0 & .11 &.10&.02  & 0 & 0 &.09  &1  &.06&.08\\
            0 & 0& .13& .01 &0  &.12  &0&.06&1&.04\\
            0 &0 & .17&0  & 0 &.07 &.16  &0  &.09&1\\
        \end{bmatrix}.
    \end{equation*}
    Carrying out an exhaustive search in the set of all partitions, we conclude that the partition minimizing the total free riding is
    \begin{equation}\label{opt part}
        \{1,5,8 \},\{2,9 \},\{ 3,6,7,10\},\{4 \}.
    \end{equation}
    Fig.~\ref{fig:Dynamics} shows the evolution of players' actions given that this partition is adopted by the principal. It can be seen in Fig.~\ref{fig:Dynamics} that every player's action asymptotically converges as time grows, which is consistent with the result of Theorem~\ref{theorem1}.

    \begin{figure}[t]
    \centering
        \includegraphics[width=.5\linewidth]{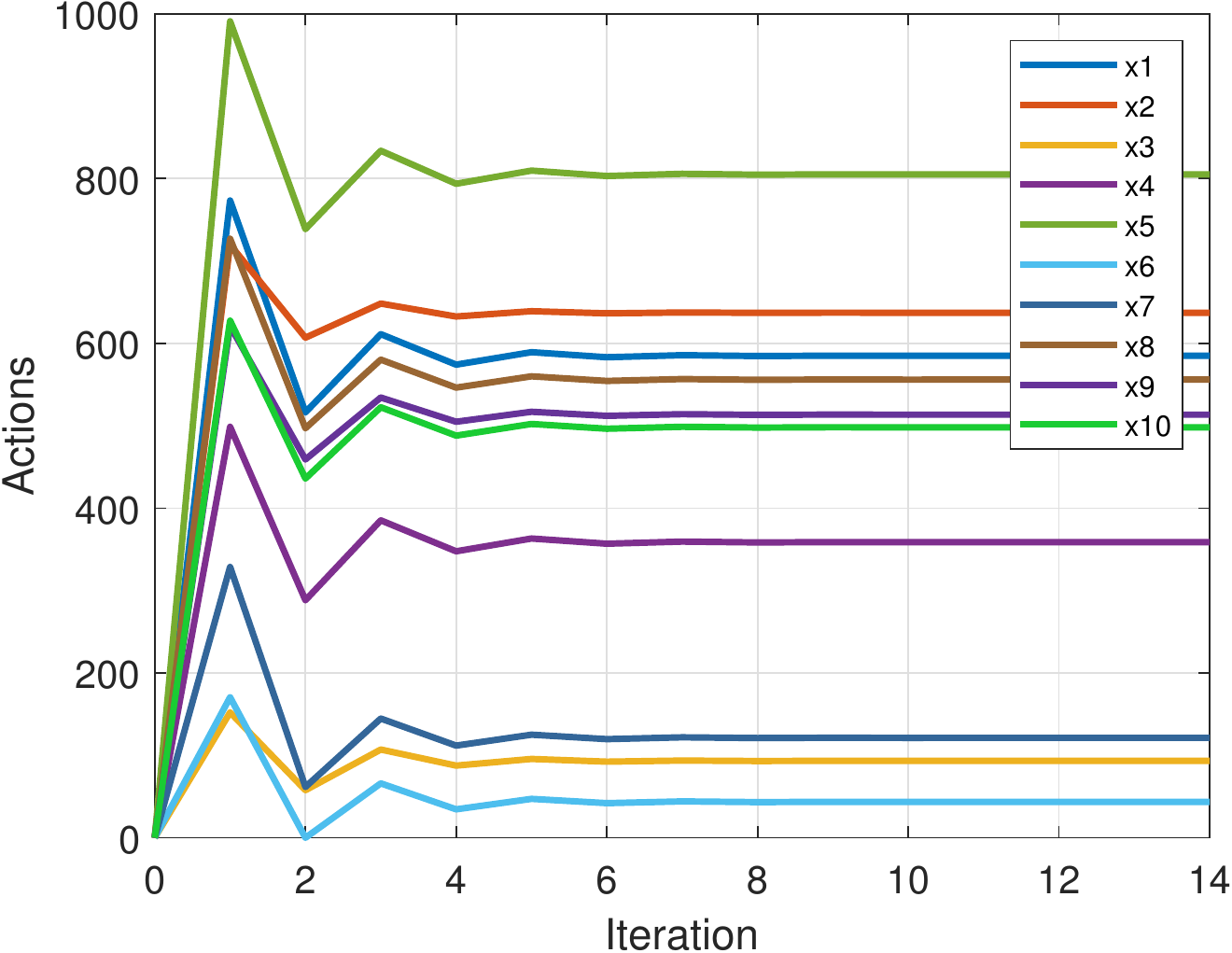}
        \caption{Players' actions over time given the optimal partition.}
    \label{fig:Dynamics}
    \end{figure}

    We now simulate the algorithm presented in Section \ref{section:5} to find a partition that is expected to minimize the total free riding. Matrix $H^*$ is derived by solving the optimization problem \eqref{eq:10}, resulting in the graph drawn in Fig.~\ref{fig:2}, the adjacency matrix of which is equal to $H^*$. One should notice that the thickness of edges in the graph indicates their weights.
    
    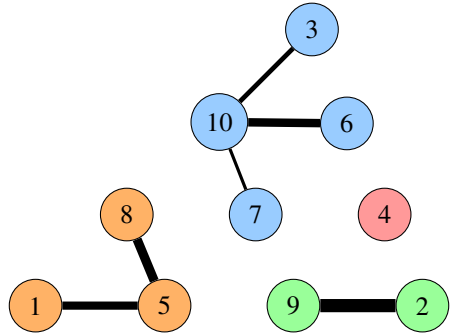
\begin{figure}\centering
        \begin{tikzpicture}[
node distance=1cm,
mynode/.style={
  circle,
  draw,
  fill=mygreen,
  align=center,minimum size=7mm}
]

\node[mynode,fill=myorange](1){1} ;
\node[mynode,fill=myorange](5)[right=of 1]{5} ;  
\node[mynode,fill=myorange](8)[above right=of 1]{8};
\node[mynode,fill=myblue](7)[right=of 8]{7};
\node[mynode,fill=myblue](10)[above right=of 8]{10};
\node[mynode](9)[right=of 5]{9};
\node[mynode](2)[right=of 9]{2};
\node[mynode,fill=myblue](6)[above right=of 7]{6};
\node[mynode,fill=myblue](3)[above right=of 10]{3};
\node[mynode,fill=myred](4)[above right =of 9]{4};
\draw[-,line width=3.1pt,] 
  (1) -- node[rotate=90,below,] {} (5); 
  \draw[-,line width=5pt,] 
  (2) -- node[rotate=90,below,] {} (9); 
  \draw[-,line width=1.8pt,] 
  (3) -- node[rotate=2.7,below,] {} (10);
  \draw[-,line width=3.5pt,] 
  (5) -- node[rotate=90,below,] {} (8); 
  \draw[-,line width=3.2pt,] 
  (6) -- node[rotate=90,below,] {} (10); 
  \draw[-,line width=1.2pt,] 
  (7) -- node[rotate=90,below,] {} (10);
  
\end{tikzpicture} 
 \caption{Graph with adjacency matrix $H^*$} \label{fig:2}
 \end{figure}
    
    One clearly observes that communities are formed between nodes of the same color, a result that is made concrete by applying the community detection algorithm of Subsection \ref{section:5.4} on the graph with adjacency matrix $H^*$. These communities constitute the same partition as that of \eqref{opt part}, which shows the effectiveness of our process of finding an optimal partition.
    
    Next, we impose a lower bound value of 3 on the cardinality of all blocks in the partition, meaning that no singleton block or block of size 2 is permitted, while the objective is to minimize the total free riding. Performing an exhaustive search in the feasible set of partitions results in the following optimal partition:
    
    \begin{equation}\label{opt part 2}
        \{1,5,8 \},\{2,4,9 \},\{3,6,7,10\}.
    \end{equation}
    Now, implementing our algorithm proposed in Section \ref{section:5}, we derive the matrix $H^*$ that is the solution of \eqref{eq:11 with privacy}, and draw its corresponding graph in Fig.~\ref{fig:3}. We then apply the community detection algorithm of Subsection \ref{section:5.4} on this graph, which leads to the same partition as in \eqref{opt part 2}, confirming the effectiveness of our algorithm in the case where the privacy constraint is imposed.

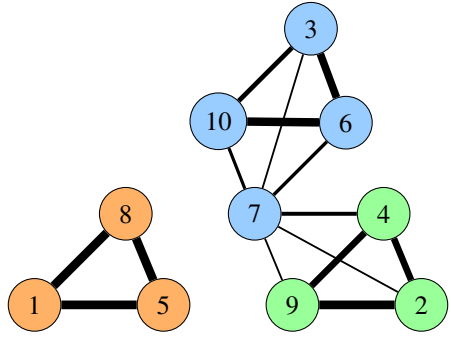
\begin{figure}[t]\centering
    \begin{tikzpicture}[
node distance=1cm,
mynode/.style={
  circle,
  draw,
  fill=mygreen,
  align=center,minimum size=7mm}
]

\node[mynode,fill=myorange](1){1} ;
\node[mynode,fill=myorange](5)[right=of 1]{5} ;  
\node[mynode,fill=myorange](8)[above right=of 1]{8};
\node[mynode,fill=myblue](7)[right=of 8]{7};
\node[mynode,fill=myblue](10)[above right=of 8]{10};
\node[mynode](9)[right=of 5]{9};
\node[mynode](2)[right=of 9]{2};
\node[mynode,fill=myblue](6)[above right=of 7]{6};
\node[mynode,fill=myblue](3)[above right=of 10]{3};
\node[mynode,fill=mygreen](4)[above right =of 9]{4};
\draw[-,line width=3.3pt,] 
  (1) -- node[rotate=90,below,] {} (5); 
  \draw[-,line width=3.3pt,] 
  (1) -- node[rotate=90,below,] {} (8);

  \draw[-,line width=3.3pt,] 
  (2) -- node[rotate=90,below,] {} (9); 
    \draw[-,line width=2.5pt,] 
  (2) -- node[rotate=90,below,] {} (4); 
    \draw[-,line width=0.7pt,] 
  (2) -- node[rotate=90,below,] {} (7); 
  \draw[-,line width=1.8pt,] 
  (3) -- node[rotate=2.7,below,] {} (10);
  \draw[-,line width=2.9pt,] 
  (3) -- node[rotate=2.7,below,] {} (6);
    \draw[-,line width=0.7pt,] 
  (3) -- node[rotate=2.7,below,] {} (7);
    \draw[-,line width=1.5pt,] 
  (4) -- node[rotate=2.7,below,] {} (7);
    \draw[-,line width=2.5pt,] 
  (4) -- node[rotate=2.7,below,] {} (9);
    \draw[-,line width=3.3pt,] 
  (5) -- node[rotate=90,below,] {} (8); 
    \draw[-,line width=1.3pt,] 
  (6) -- node[rotate=90,below,] {} (7); 
    \draw[-,line width=3.2pt,] 
  (6) -- node[rotate=90,below,] {} (10);
   \draw[-,line width=0.7pt,] 
  (7) -- node[rotate=90,below,] {} (9);
  \draw[-,line width=1.2pt,] 
  (7) -- node[rotate=90,below,] {} (10);

\end{tikzpicture} 
\caption{Graph with adjacency matrix $H^*$ given the privacy constraint} \label{fig:3}
\end{figure}  
    
    As we mentioned earlier, the main application of our process of finding an optimal partition is in the case of large networks, where the number of agents renders an exhaustive search among the partitions computationally too complex. Thus, we now consider a network of 50 agents and aim to find a partition that, when adopted by the principal, maximizes the social welfare. For simulation, we consider utility functions in the form of \eqref{utility-function} and generate matrix $W$ and vector $b$ randomly in such a way that it satisfies $\|(I-W)H_p\|<0.5$ and condition \eqref{b condition}. A lower bound value of 4 is also imposed on the cardinality of each block of the partition as the privacy constraint. We should note that in this case, we will not be able to validate the result of our algorithm by conducting an exhaustive search due to its enormous computational complexity.
    
    Initiating our process of finding an optimal partition, we derive the matrix $H^*$ by solving the convex optimization problem \eqref{eq:11 with privacy}. The graph whose adjacency matrix is $H^*$ is drawn in Fig.~\ref{fig:4}, where communities corresponding to the blocks of the partition are also detected and given the same color. We finally reiterate that our proposed algorithm of finding optimal partitions runs in polynomial time, while an exhaustive search has a superexponential computational complexity.

\begin{figure}[t]
    \centering
    \includegraphics[width=.6\linewidth]{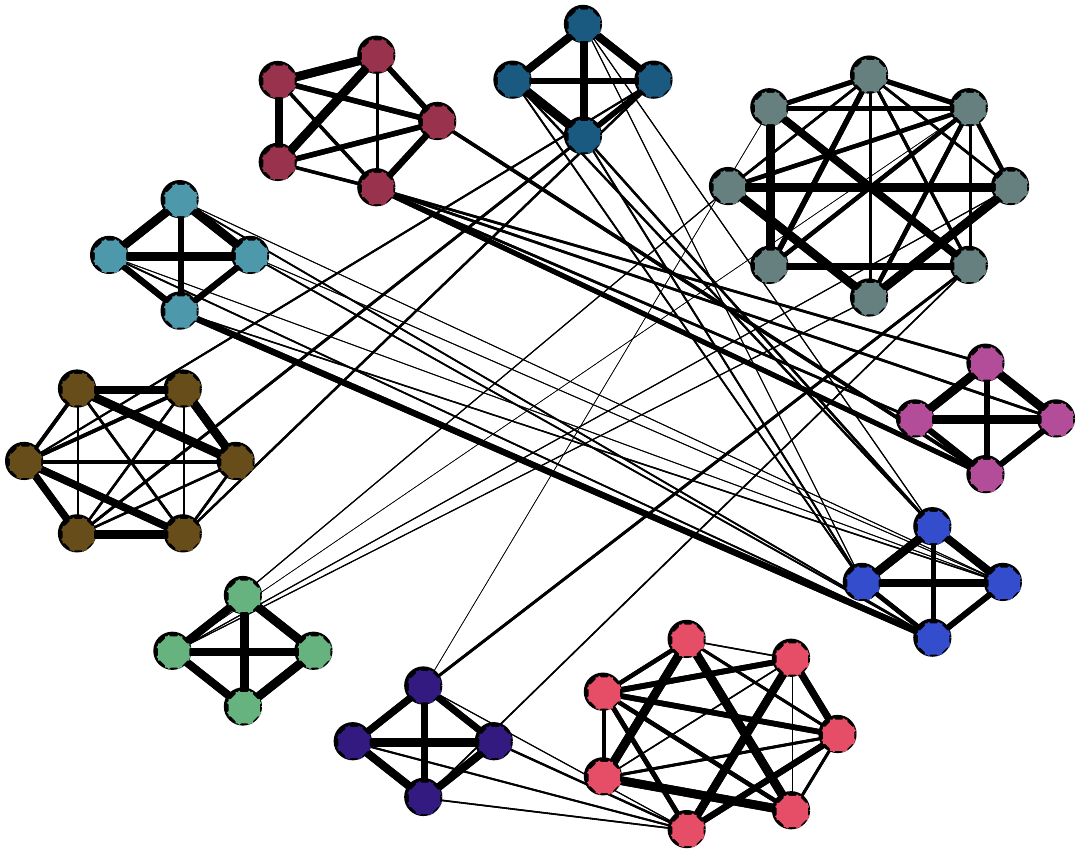}

     \centering
    \caption{Graph with adjacency matrix $H^*$ for the network of 50 agents}
    \label{fig:4}
\end{figure}
\section{Concluding Remarks and Future Work}\label{conclusion}

    In this paper, we introduced the partial observability approach for improving the performance of a network. We assumed that a principal has the authority to partition the agents into blocks and announce the mean value of each block's members' latest actions publicly. This selective disclosure of information influences the evolution of agents' actions, and consequently, the performance of the network. We conducted a comprehensive analysis of the network dynamics under the partial observability approach, addressed how the principal finds optimal partitions with respect to various performance metrics, and validated our findings via numerical examples.
    
    We argued that the proposed information announcement paradigm is also capable of addressing the issue of privacy, as blocks of larger cardinalities are considered to be less revealing of their members' actions. Although the proposed approach can be applied to arbitrary game models and dynamical systems, we focused on a network aggregative game model for security decision-making of inter-connected agents.
    
    
    The generality of the original problem allows for multiple directions for future research. The method by which the players estimate each others' actions at each stage of the game can be generalized to take into account a richer history of the game than the single previous stage. Furthermore, dynamic partitioning of the players is of great interest. Finally, a realistic case can be investigated where the payoff functions of the players and their interconnections are not initially known to the principal and are to be learned over time.

\bibliographystyle{IEEEtran}        
\bibliography{arr} 

\end{document}